\newcommand{\R}{{\mathbb R}}
\newcommand{\M}{{\mathcal M}_{n, k}}
\newcommand{\conv}{{\mathrm{conv}}}
\newtheorem{theorem}{Theorem}[section]
\newtheorem{lemma}[theorem]{Lemma}
\newtheorem{proposition}[theorem]{Proposition}
\newtheorem{cor}[theorem]{Corollary}
\newtheorem{defin}{Definition}
\DeclareMathOperator{\Cube}{Cube}
\DeclareMathOperator{\Coor}{Coor}
\DeclareMathOperator{\Cross}{Cross}
\begin{document}

\title[Cubes, Trees, and No $k$-equals]{On the topology of no $k$-equal spaces}
\author{Yuliy Baryshnikov}
\address[Yuliy Baryshnikov]{University of Illinois at Urbana-Champaign}
\author{Caroline Klivans}
\address[Caroline Klivans]{Brown University}
\author{Nicholas Kosar}
\address[Nicholas Kosar]{University of Illinois at Urbana-Champaign}
\email{kosar2@illinois.edu}

\begin{abstract}
  We consider the topology of real no $k$-equal spaces via the theory of cellular spanning trees.   Our main theorem proves that
the rank of the $(k-2)$-dimensional homology of the no $k$-equal subspace of $\R$ is equal to
  the number of facets in a $k$-dimensional spanning tree of the $k$-skeleton of the $n$-dimensional hypercube.
\end{abstract}

\maketitle

\section{Introduction}

For any topological space $X$, the $n^{th}$ no $k$-equal space of $X$
consists of the collection of all sets of $n$ points on $X$ such that
no $k$ of them are equal. The important special case $k=2$ yields the
configuration space of $X$. The study of no $k$-equal spaces for
general $k$ and $X = \R$ started with work in complexity theory by
Bj\"{o}rner, Lov\'{a}sz, and Yao \cite{BLY}.  Consider the following
problem: Given $n$ real numbers, determine if any $k$ of them are
equal.
 In \cite{BLY}, the authors sought to bound the depth of a linear
 decision tree for this problem.  In a novel application of algebraic
 combinatorics, the task was reposed as a subspace arrangement
 membership problem so that the complexity could be bounded by the
 Betti numbers of a topological space.

Bj\"{o}rner and Welker first determined the Betti numbers of the no
$k$-equal spaces of $\R$ \cite{BW}.  Their work used the techniques of
both Goresky-MacPherson~\cite{GM} and Ziegler-\v{Z}ivaljevi\'c~\cite{ZZ}
which provide methods to derive the topology of complements of
arrangements in terms of combinatorics of posets.  Further work by
various authors determined the cohomology rings and other properties
of no $k$-equal spaces; see e.g. \cite{Yuz, YB2, DT}.  In particular,
Baryshnikov and Dobrinskaya-Turchin gave explicit geometric
representatives for homology.

In another direction, for any $n$-dimensional cellular complex
$\Sigma$, and any $k \leq n$,  one can define a $k$-dimensional spanning
tree of $\Sigma$ as a certain subset of the $k$-skeleton of $\Sigma$. 
Cellular spanning trees capture the complexity of a space by generalizing
the well-known properties of spanning trees of graphs.  
 The notion of a higher dimensional spanning tree has its
origins in work of Bolker \cite{Bolk} and Kalai \cite{Kalai}.  More
recently, there has been much activity in developing the theory of
such trees; see e.g. \cite{DKM09, Lyons} and \cite{chapter} for an
overview of the topic.  Importantly, higher dimensional trees are
formulated algebraic and topologically with the graphical
requirements of a tree generalized in terms of homology and Betti numbers.

We connect these two areas of research in our main theorem which
equates the Betti number of the real no $k$-equal space, i.e. the complexity
bound of \cite{BLY} and \cite{BW}, with the size of a spanning tree of
the hypercube:

\begin{restatable}{thm}{goldbach}
  \label{main}
The rank of the $(k-2)$-dimensional homology group of the no $k$-equal subspace of $\R$ is equal to 
the number of facets in a $k$-dimensional spanning tree of the $k$-skeleton of the $n$-dimensional hypercube.
\end{restatable}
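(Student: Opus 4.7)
The plan is to identify $H_{k-2}(M_n^{(k)};\Q)$ with the image of the cellular boundary map $\partial_k \colon C_k(Q_n^{(k)};\Q) \to C_{k-1}(Q_n^{(k)};\Q)$ of the $k$-skeleton of the $n$-dimensional cube. By the standard theory of cellular spanning trees, $\operatorname{rank}(\operatorname{im}\partial_k) = f_k(Q_n^{(k)}) - \beta_k(Q_n^{(k)})$, which is exactly the number of facets in any $k$-dimensional spanning tree of $Q_n^{(k)}$; producing such an identification therefore suffices.

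First I would index the $k$-faces of $Q_n$ by pairs $(S,\epsilon)$ with $S\subseteq[n]$, $|S|=k$, and $\epsilon\colon[n]\setminus S\to\{0,1\}$. To each such face $F=F(S,\epsilon)$ I associate an explicit $(k-2)$-cycle $\gamma_F\in Z_{k-2}(M_n^{(k)})$, modeled on the geometric representatives of Baryshnikov and of Dobrinskaya--Turchin: fix the non-$S$ coordinates at widely separated values prescribed by $\epsilon$ (say, far below the unit interval if $\epsilon_j=0$ and far above if $\epsilon_j=1$), and let the $S$-coordinates trace a small round sphere linking the thin diagonal $\{x_i=\alpha : i\in S\}$ for some $\alpha\in(0,1)$. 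Since the diagonal has been removed, this is a genuine $(k-2)$-sphere in $M_n^{(k)}$. Extending linearly and passing to homology classes defines a homomorphism $\Phi\colon C_k(Q_n^{(k)};\Q)\to H_{k-2}(M_n^{(k)};\Q)$, and surjectivity follows from the established fact that these cycles generate the homology in degree $k-2$.

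The heart of the argument is the claim $\ker\Phi = Z_k(Q_n^{(k)};\Q)$. Since $Q_n$ is contractible, $Z_k(Q_n^{(k)})$ is generated by the cellular boundaries $\partial G$ of $(k+1)$-faces $G=G(T,\eta)$, so it is enough to produce a null-homology of $\Phi(\partial G)$ for each such $G$. The natural candidate is the $(k+1)$-dimensional affine slab $\tilde G \subset \R^n$ obtained by letting the $T$-coordinates range freely while keeping the non-$T$ coordinates set by $\eta$; the boundary faces $F\subset\partial G$ organize themselves around the $T$-diagonal of $\tilde G$, and an explicit analysis of the sub-arrangement of $M_n^{(k)}$ inside $\tilde G$ should yield a $(k-1)$-chain in $\tilde G\cap M_n^{(k)}$ whose boundary is $\sum_{F\subset\partial G}\pm\gamma_F$.

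The main obstacle will be the reverse inclusion --- injectivity of the induced map $\bar\Phi\colon C_k(Q_n^{(k)})/Z_k(Q_n^{(k)})\to H_{k-2}(M_n^{(k)})$. My preferred strategy is a dual pairing: for each $k$-face $F$, construct a properly embedded $(n-k+2)$-dimensional chain $D_F\subset\R^n$ transverse to the arrangement, such that the linking number of $D_F$ with $\gamma_{F'}$ is $\pm\delta_{F,F'}$ up to cycle corrections, so nondegeneracy forces $\ker\bar\Phi=0$. As a fallback, injectivity --- and hence the full theorem --- can be verified by a dimension count: comparing Bj\"orner--Welker's formula for $\beta_{k-2}(M_n^{(k)})$ to $f_k(Q_n^{(k)}) - \beta_k(Q_n^{(k)})$, with the latter expanded via the Euler-characteristic identity $\beta_k(Q_n^{(k)}) = (-1)^k\bigl(-1+\sum_{i=0}^{k}(-1)^i\binom{n}{i}2^{n-i}\bigr)$, reduces the theorem to a combinatorial identity amenable to direct verification.
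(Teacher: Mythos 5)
Your route is genuinely different from the paper's, and it is worth being clear about what each buys. The paper never constructs explicit cycles in $\M$: it converts the tree size into $\beta_{k-1}(\Cube_{n,k-1})$ by an Euler-characteristic argument (Proposition~\ref{tree_to_beti}), passes by polar duality to $\beta_{n-k-1}(\Cross_{n,n-k-1})$, identifies the suspension of that skeleton with the one-point compactification of the codimension-$k$ coordinate arrangement $\Coor_k$, projects $\Coor_k$ along the diagonal onto the $k$-equal arrangement $\mathcal{A}_{n,k}$, uses a simplicial resolution to show this projection changes homology only in dimensions at most $n-k-2$, and closes with Alexander duality in $S^n$ to reach $\beta_{k-2}(\M)$. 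Your primary plan instead builds the identification at chain level, presenting $H_{k-2}(\M;\Q)$ as $C_k/Z_k$ of the cube via linking spheres $\gamma_F$; if completed it would be a sharper statement than Theorem~\ref{main} (an explicit presentation of the homology by cube facets modulo cycle relations, in the spirit of the Baryshnikov and Dobrinskaya--Turchin representatives the paper cites), whereas the paper's argument is rank-only but complete and avoids any explicit cycle bookkeeping.

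As written, however, the primary route has a genuine gap at injectivity. The proposed pairing with classes $D_F$ satisfying ``$\pm\delta_{F,F'}$ up to cycle corrections'' is not yet an argument: unless the correction terms are shown to lie in $Z_k$ on \emph{both} sides, the pairing need not even descend to $C_k/Z_k$, let alone be nondegenerate there, and you would have to say where the $D_F$ live (Borel--Moore homology of $\mathcal{A}_{n,k}$, or $H^{k-2}(\M)$) and compute the full intersection matrix. The containment $Z_k\subseteq\ker\Phi$ is likewise only asserted (``should yield a $(k-1)$-chain''); matching the signs in $\sum_{F\subset\partial G}\pm\gamma_F$ to the cellular boundary of a $(k+1)$-face requires a coherent orientation convention and ``widely separated'' parameters chosen compatibly for all $2(k+1)$ faces of $G$ at once, and surjectivity is outsourced to the literature. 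Your fallback does prove the literal numerical statement --- Bj\"orner--Welker's rank equals $f_k(T)$, and the binomial identity is routine --- but then the construction of $\Phi$ plays no role and the proof collapses to exactly the ``numerical result \dots noted independently of any connection'' that the introduction explicitly sets out to go beyond. So the fallback is valid but unilluminating, and the interesting route is the one where the real work remains undone.
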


The numerical result of Theorem~\ref{main} can be noted independently
of any connection between the subspace arrangements and higher
dimensional spanning trees. Here, however, we offer a geometric
relationship between the two objects via an elementary construction we
call the {\em simplicial resolution}.  One could achieve the same
result using  homotopy colimits, but we specifically opted for a
more geometric route.  Hence,  we achieve the equality in
Theorem~\ref{main} without needing any knowledge of the explicit
values involved.

Theorem~\ref{main} should be seen in two ways.  First, it answers the
question of \emph{why} the Betti numbers of the real no $k$-equal
space are given by the sizes of trees of the cube.  Second, it is a
demonstration of a new approach to determining the topology of
complements of arrangements using combinatorial considerations but with no
need of poset analysis.

Additionally, we show a second situation where this idea may be used by
generalizing Theorem~\ref{main} to an arrangement that has not yet been
studied: the comb no $k$-equal arrangement.

\begin{restatable}{thm}{name}
\label{general}
The rank of the $(k-2)$-dimensional homology group of the generic comb no $k$-equal subspace of $\R$ is equal to 
the number of facets in a $k$-dimensional spanning tree of the $k$-skeleton of a pile of $n$-dimensional hypercubes.
\end{restatable}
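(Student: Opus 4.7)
The plan is to adapt the simplicial resolution strategy developed for Theorem~\ref{main} to the comb setting. First I would make precise the combinatorial structure of the generic comb no $k$-equal arrangement: the genericity hypothesis should guarantee that its complement, like the ordinary no $k$-equal subspace, admits a block decomposition whose combinatorics matches the cellular structure of the pile of $n$-dimensional cubes (one cube per ``column'' of teeth, glued along their shared lower-dimensional faces according to how different teeth of the comb interact). Establishing this dictionary between strata of the arrangement complement and cells of the pile is mostly bookkeeping, but it is where the word \emph{generic} does real work: it rules out coincidences among teeth that would identify otherwise distinct cells.

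Next, I would construct the simplicial resolution of the comb arrangement by imitating the construction underlying Theorem~\ref{main} one cube at a time, then gluing the local resolutions together along the shared faces of the pile. The goal is a simplicial complex whose geometric realization is homotopy equivalent to the generic comb no $k$-equal space, and whose $k$-dimensional faces are in natural bijection with the facets of the $k$-skeleton of the pile. Once this is in hand, the same homological bookkeeping as in Theorem~\ref{main} applies: one reads the rank of $H_{k-2}$ of the complement from the cellular chain complex of the resolution, and this count equals the number of facets in a $k$-dimensional spanning tree of the pile by the defining property of such trees (the number of $k$-facets outside a spanning tree equals the rank of $H_{k-1}$ of the $k$-skeleton, and the Euler characteristic matches up the two sides).

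The main obstacle will be the gluing step, which is exactly what distinguishes the comb case from the single-cube case handled by Theorem~\ref{main}. When the local (single-cube) resolutions are assembled across the shared faces of the pile, the boundary maps must match up to give a well-defined global resolution without spurious homology, and the orientation conventions on shared facets must be tracked carefully so that the ranks computed locally sum to the correct global rank. A secondary, technical issue is verifying that homotopy equivalence really does survive the gluing; here I would proceed inductively on the number of cubes in the pile, each time attaching one new cube along an $(n-1)$-dimensional face and checking that the attachment is along a contractible piece of the resolution so that the homotopy type changes in the expected way.
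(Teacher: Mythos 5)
Your proposal aims the simplicial resolution at the wrong object, and this is a genuine gap rather than a stylistic difference. In the paper's argument (both for Theorem~\ref{main} and here), the resolution is never applied to the complement $M^A_k$: it is applied to a surjection \emph{onto the compactified arrangement} $(\Delta^A_k)^*$, and the complement enters only at the very last step through Alexander duality in $S^n$, which gives $\beta_{n-k}((\Delta^A_k)^*)=\beta_{k-2}(M^A_k)$. Your plan to produce ``a simplicial complex whose geometric realization is homotopy equivalent to the generic comb no $k$-equal space'' and whose $k$-faces biject with facets of the pile cannot work as stated: the complement is an open full-dimensional subset of $\R^n$ whose relevant homology sits in degree $k-2$, while the spanning-tree count equals $\beta_{k-1}$ of the $(k-1)$-skeleton of the pile (Proposition~\ref{tree_to_beti}); the shift from degree $k-2$ to degree $k-1$ (in fact, through degree $n-k$ of the arrangement) is exactly what Alexander duality and the codimension count supply, and no direct cell-for-cell dictionary between the complement and the pile reproduces it.

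The missing ingredients are: (i) the pile of cubes $B$ (built from points chosen in the intervals cut out by the $A_j$) and the product complex $A$ (built from the points of the $A_j$ and those intervals, compactified at infinity) are \emph{dual} CW structures on $S^n$, so $H_{k-1}$ of the $(k-1)$-skeleton of $B$ is identified with $H_{n-k}$ of the $(n-k)$-skeleton of $A$ with no gluing or induction on cubes required---the construction is global, not cube-by-cube, so the ``main obstacle'' you anticipate does not arise; (ii) the diagonal projection of $A$ onto the hyperplane $S$ lands in $\Delta^A_k$, and the simplicial resolution of this projection is a homotopy equivalence whose added cells have dimension at most $n-k-2$, hence does not change $H_{n-k}$; and (iii) the genericity hypothesis (no $k$-dependences among the $A_j$) is used precisely in step (ii), to bound the dimension of the loci where the projection is non-injective, not to rule out identifications of cells of the pile. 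Because your outline omits both Alexander duality and the dual-complex step, the ``homological bookkeeping'' you invoke at the end has nothing connecting the two sides of the identity.
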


In the following sections we introduce no $k$-equal spaces, higher dimensional trees, and simplicial resolutions.  In section \ref{Proof}, we prove our main result Theorem \ref{main}. Finally, in section \ref{piles}, we define the relevant notions and prove Theorem~\ref{general}.

\subsection{Acknowledgments}
 The work here began as a collaboration during the semester program
 Topology in Motion held at the Institute for Computational and
 Experimental Research in Mathematics (ICERM) in Fall 2016.  NK was
 supported in part by the NSF through the grant DMS-1622370.  The authors
 thank Eva-Maria Feichtner and Hannah Alpert for interesting
 discussions.

\section{No $k$-equal subspaces}

\begin{defin}
For a topological space $X$, the \emph{$n^{th}$ no $k$-equal space of $X$}
consists of the collection of all sets of $n$ points on $X$ such that no $k$ of them are 
equal.  
\end{defin}
The important special case $k=2$ yields the configuration space of $X$.
Here, we will consider arbitrary $k$ and $X = \R$.  This space was first
studied explicitly by Bj\"{o}rner, Lov\'{a}sz,  and Yao~\cite{BLY} in connection to complexity theory.  In order to understand the no $k$-equal subspace arrangement of $\R$, 
 it is easier to first consider the collections of points 
of the complement.
  This gives an arrangement of linear subspaces:

\begin{defin}
The \emph{$k$-equal arrangement}  $\mathcal{A}_{n, k}$ of $\R^n$ is the subspace arrangement consisting of all subspaces of the form $\{x_{i_1} = \cdots = x_{i_k}\}$ for $1 \leq i_1 < \ldots < i_k \leq n$.
\end{defin}

Again, $k=2$ yields an important special case.  The arrangement
$\mathcal{A}_{n,2}$ consists of all {hyperplanes} of the form $\{x_i = x_j\}$
for $ 1 \leq i < j \leq n$ and is known as the {Braid}
arrangement of $\R^n$ or equivalently the Coxeter arrangement of type
$A$ in $\R^n$.  For $k>2$, $\mathcal{A}_{n,k}$ consists not of hyperplanes but subspaces of codim-$(k-1)$.

\begin{defin}
The \emph{$n^{th}$ no $k$-equal space} $\M$ of $\R$ is the complement
in $\R^n$ of the $k$-equal arrangement, $$\M = \R^n \setminus \mathcal{A}_{n,k}.$$
\end{defin}

The space $\mathcal{M}_{n,2}$, which is the complement of the Braid
arrangement in $\R^n$, is simply a union of disjoint contractible cones.  The
closure of any one cone is a fundamental chamber for the type $A$ Weyl
group.  The complexified picture, $\mathcal{M}^{\mathbb{C}}_{n,2}$,
equal to the complement of $\mathcal{A}_{n,k}$ in $\mathbb{C}^n$ has
richer topology and is known as the pure Braid space.  For $k >2$, the
real picture also becomes non-trivial.

Bj\"{o}rner, Lov\'{a}sz, and Yao
 were investigating the following problem:
given $n$ real numbers, decide if any $k$ of them are equal.  In terms
of the spaces defined above, the problem becomes: given a point in
$\R^n$, decide if it lies on $\mathcal{A}_{n,k}$.  A main result of~\cite{BLY}
is that (in a certain formal sense) the complexity of answering this
question can be bounded by the $(k-2)^{\textrm{nd}}$ Betti number of $\M$.

Bj\"{o}rner and Welker were the first to explicitly compute these Betti numbers.  The results of \cite{BW} 
give much finer information than we restate here including an understanding of the topology in the complexified case.  We need only the following.  
\begin{theorem}[Theorem 1.1 of \cite{BW}]
The cohomology groups of $\mathcal{M}_{n,k}$ are free.  Furthermore, 
$$\textrm{rank } H^{k-2}(\mathcal{M}_{n,k}) = \sum_{i=k}^n { n \choose
  i} { i-1 \choose k-1}, \textrm{ if } k \geq 3.$$
\end{theorem}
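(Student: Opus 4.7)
My plan is to prove the Bj\"orner--Welker theorem via the Goresky--MacPherson formula, reducing to the combinatorial topology of the $k$-equal partition lattice $\Pi_{n,k}$ (the intersection lattice of $\mathcal{A}_{n, k}$, consisting of partitions of $[n]$ whose non-singleton blocks all have size $\geq k$). Goresky--MacPherson gives
\[
\widetilde H^{i}(\mathcal{M}_{n,k}) \;\cong\; \bigoplus_{\pi > \hat 0} \widetilde H_{\operatorname{codim}(\pi) - i - 2}\bigl(\Delta(\hat 0, \pi)\bigr),
\]
where $\Delta$ is the order complex of the open interval and the sum is over non-minimum elements of $\Pi_{n,k}$. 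Freeness of $H^{*}(\mathcal{M}_{n,k})$ will fall out once each right-hand-side summand is shown to be free.

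For $\pi \in \Pi_{n,k}$ with non-singleton blocks $B_1, \ldots, B_s$ of sizes $b_j$, the lower interval factors as a poset product $[\hat 0, \pi] \cong \prod_j \Pi_{b_j, k}$, and hence
\[
\Delta(\hat 0, \pi) \;\simeq\; \overline{\Pi}_{b_1, k} * \cdots * \overline{\Pi}_{b_s, k}, \qquad \overline{\Pi}_{m, k} := \Delta(\hat 0, \hat 1)_{\Pi_{m, k}}.
\]
The key topological input is CL-shellability (in the sense of Bj\"orner--Wachs --- the lattice is not graded, so EL-shellability is not directly available) of each $\Pi_{m, k}$. Shellability implies $\overline{\Pi}_{m, k}$ is homotopy equivalent to a wedge of spheres of top dimension $m - k - 1$, so its reduced integral homology is free and concentrated in that degree; let $\alpha_m$ denote its rank. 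A join of wedges of $d_j$-spheres is a wedge of $(\sum d_j + s - 1)$-spheres, so with $d_j = b_j - k - 1$ the summand corresponding to $\pi$ contributes to $H^{k-2}$ precisely when $\sum b_j - sk - 1 = \operatorname{codim}(\pi) - k = \sum b_j - s - k$, i.e.\ $s(k-1) = k - 1$; for $k \geq 3$ this forces $s = 1$. Hence only partitions with a single non-trivial block of size $i \in [k, n]$ contribute, and there are $\binom{n}{i}$ of them, yielding
\[
\operatorname{rank} H^{k-2}(\mathcal{M}_{n,k}) \;=\; \sum_{i=k}^n \binom{n}{i}\,\alpha_i.
\]

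To finish I would show $\alpha_i = \binom{i-1}{k-1}$, either by counting descending maximal chains of $(\hat 0, \hat 1)$ in $\Pi_{i, k}$ under the CL-labeling --- a clean bijection should identify such chains with $(k-1)$-subsets of $[i-1]$, since the descending condition forces the initiating atom to contain the maximum point $i$ and uniquely orders the subsequent absorptions --- or by computing the M\"obius number $\mu_{\Pi_{i, k}}(\hat 0, \hat 1) = (-1)^{i - k}\binom{i-1}{k-1}$ via Philip Hall's formula or induction on $i$ and invoking homology concentration. The real difficulty is constructing and verifying the CL-labeling of the non-graded lattice $\Pi_{m, k}$, whose cover relations mix atom formation, singleton absorption, and mergers of non-trivial blocks, and simultaneously running the descending-chain count through it in the presence of block-merger covers; the Goresky--MacPherson reduction and the dimension-matching argument forcing $s = 1$ are essentially formal, so the topological heavy lifting lives inside the shellability argument.
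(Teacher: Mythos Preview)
The paper does not prove this theorem; it quotes it from Bj\"orner--Welker and only remarks that the argument goes through Goresky--MacPherson and an analysis of the partition lattice $\Pi_{n,k}$. Your outline follows exactly that route, so at the level of strategy you are aligned with what the paper describes.

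There is, however, a genuine gap in your sketch. The assertion that CL-shellability forces $\overline{\Pi}_{m,k}$ to have reduced homology \emph{concentrated} in the single top degree $m-k-1$ is false once $m\geq 2k$: the lattice is not graded, the order complex is non-pure, and Bj\"orner--Wachs show it is a wedge of spheres of several dimensions $m-3-t(k-2)$, $1\le t\le \lfloor m/k\rfloor$. Concretely, for $m=6$, $k=3$ one computes $\mu_{\Pi_{6,3}}(\hat 0,\hat 1)=0$, while the top Betti number is $\binom{5}{2}=10$; so your M\"obius alternative for $\alpha_i$ collapses, and the stated formula $\mu=(-1)^{i-k}\binom{i-1}{k-1}$ is simply wrong in this range. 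Your dimension-matching step survives with a small repair: replace ``$d_j=b_j-k-1$'' by the inequality $d_j\le b_j-k-1$ (the top dimension of the non-pure order complex) and note that the join then has homology only in degrees $\le \sum b_j-sk-1$, which for $s\ge 2$ and $k\ge 3$ is strictly below $\operatorname{codim}(\pi)-k$; hence only $s=1$ with each $d_j$ at the top can contribute to $H^{k-2}$. For the value of $\alpha_i$ you must therefore count only the \emph{maximal-length} falling chains in the CL-labeling (those of length $i-k+1$, which build a single $k$-block and then absorb singletons one at a time); that refined count, not the M\"obius number, is what yields $\binom{i-1}{k-1}$.
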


The proof of this theorem uses the Goresky-MacPherson theorem so that
the homology can be computed combinatorially.  More specifically, the
Goresky-MacPherson theorem gives the cohomology of the complement of a
subspace arrangement in terms of the homology groups of order complexes
formed from the intersection lattice of the arrangement~\cite{GM}.   For the
space $\M$, the intersection lattice consists of partitions of a
special form and the homology of $\M$ is computed via a detailed
analysis of these partition lattices.

\section{$k$-dimensional spanning trees}

In this section we introduce $d$-dimensional spanning trees for
$d$-dimensional cell complexes.  For any topological space, $X$, we
denote the rank of the $i^{th}$ homology group of $X$ by
$\beta_i(X)$. For any cell complex $\Sigma$, we refer to the cells of
$\Sigma$ as \emph{faces} and write $f_{\ell}(\Sigma)$ for the number of
$\ell$-dimensional faces in $\Sigma$.  The collection of all faces of
dimension $k$ or less is the \emph{$k$-skeleton} of $\Sigma$ and
denoted by $\Sigma_k$.  Finally, any face of maximal dimension is
referred to as a \emph{facet}.

The following definition is not the most general notion of a higher
dimensional tree but is sufficiently general for our purposes and
avoids unnecessary technical complications, see~\cite{chapter} for
more details.

\begin{defin}
Let $\Sigma$ be a $d$-dimensional cell complex such that $\beta_{d-1}(\Sigma) = 0$. A subcomplex $T \subset \Sigma$ such that $T_{d-1} = \Sigma_{d-1}$ is a \emph{$d$-spanning tree} if
  \begin{subequations}
  \begin{align}
  \label{acyc-condn}  & H_d(T, \mathbb{Z}) = 0,\\
  \label{conn-condn}  & |H_{d-1}(T, \mathbb{Z})| < \infty, \quad\text{and}\\
  \label{count-condn} & f_d(T) = f_d(\Sigma) - \beta_d(\Sigma).
  \end{align}
  \end{subequations}
\end{defin}

The initial condition that the $d-1$ skeleta are equal is the spanning
condition.  The other three homological conditions are analogues to
the familiar graphical conditions for a tree on $n$ vertices:
acyclicity, connectedness and having precisely $n-1$ edges.

Spaces which are themselves cellular spanning trees include any
triangulation of a disk, but also any triangulation of ${\R}P^2$.
Condition~\ref{conn-condn} allows for the presence of torsion which leads to much of the interesting structure of trees.  If
$\Sigma$ is the boundary of a convex polytope in $\mathbb{R}^d$, then any collection of all but
one facet gives a $d$-dimensional spanning tree.  More generally,
cellulated spheres are the higher dimensional analogue of cycle graphs (i.e. cellulated one-spheres)
where the removal of any one edge yields a spanning tree.

Here we will be primarily concerned with spanning trees of cubes and their skeleta.  
Let $\Cube_n$ denote the $n$-dimensional hypercube, thought of either as a
geometric convex polytope or a combinatorial cell complex.  As a
geometric object $\Cube_n$ is the convex hull of the $2^n$ points in
$\mathbb{R}^n$ whose coordinates are all $0$ or $1$.  Combinatorially,
 the face lattice consists of all ordered $n$-tuples
$(\sigma_1, \sigma_2, \ldots, \sigma_n)$, where $\sigma_i \in
\{0,1,*\}$.  A face $\sigma$ is contained in a face $\tau$ if
$\sigma_i \leq \tau_i \, \forall i$, where the digits are ordered $0 <
*, 1 < *$, and $0,1$ are incomparable.  With this encoding, the
dimension of a face is simply the number of $*$s in its string.
  Let $\Cube_{n,k}$ denote the $k$-skeleton of the $n$-cube, then
the facets of $\Cube_{n,k}$ are all $\{0,1,*\}$ strings of length $n$ with
exactly $k$ $*$s.  

Let $T \subset \Cube_{n,k}$ be a cellular spanning tree of
$\Cube_{n,k}$.  Hence $T$ contains the entire $k-1$ skeleton
$\Cube_{n,{k-1}}$ and some collection of $k$-dimensional facets of
$\Cube_{n,k}$, see \cite{cubes} for a detailed study of spanning trees of cubical complexes.  The size of $T$, i.e. the number of facets of $T$, or equivalently, the
$k$th entry of the $f$-vector $f_k(T)$ is:

\begin{equation*}
\label{size}
|T| = f_k(T) = \sum_{i=k}^n { n \choose
  i} { i-1 \choose k-1}.
\end{equation*}

The hypercube $\Cube_n$ is dual  to the $n$-dimensional
cross polytope, $\Cross_n$.  Namely, there is an inclusion
reversing bijection from the cells of $\Cube_n$ to the cells of
$\Cross_n$.  Moreover, as algebraic cell complexes, the boundary maps of
$\Cube_n$ equal the coboundary maps of $\Cross_n$.  The cross
polytope is realized as the convex hull of the $n$ standard basis vectors of
$\mathbb{R}^n$ and their opposites:

$$ \textrm{Cross}_n = \{ x \in \mathbb{R}^n :  |x_1| + |x_2| + \ldots + |x_d| \leq 1 \}. $$

The hypercube is a simple polytope, each vertex of $\Cube_n$ is
contained in precisely $n$ facets.  Dually, the crosspolytope
$\Cross_n$ is a simplicial polytope, each facet of $\Cross_n$
contains precisely $n$ vertices.

\section{Simplicial Resolutions}

The last bit of background information that concerns us is an elementary construction of a kind of ``simplicial resolution''. For a finite set of points $S \subset \R^n$, let $\conv(S)$ denote the convex hull of $S$.

We say that a (compact) set $X\subset\R^n$ is \emph{$m$-avoiding}  if for any $2m$-tuple of distinct points $\{x_1,\ldots,x_m, x'_1,\ldots,x'_m\}, x_k,x'_k\in X, 1\leq k\leq m$, the convex hulls $\conv(x_1,,\ldots,x_m), \conv(x'_1,\ldots,x'_m)$ of these tuples do not intersect.

The following Lemma, which is an immediate corollary of the Thom Transversality Theorem (see e.g. \cite[Chapter 3]{Hirsch} or \cite[Chapter 4]{Wall}) shows that any subset $X$ of $\R^n$ can be embedded as an $m$-avoiding subset: 
\begin{lemma}
For any $m$ and large enough $N$, a generic polynomial embedding of $\R^n$ into  $\R^N$ is $m$-avoiding.
\end{lemma}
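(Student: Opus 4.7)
The plan is to express the failure of the $m$-avoiding condition as a certain ``collision map'' hitting a diagonal, then knock that preimage out by a standard dimension count plus Thom transversality applied inside the space of polynomial maps of high enough degree.

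First, set up the parameter space. For a smooth $f : \R^n \to \R^N$, define the evaluation-of-convex-combinations map
$$
\Phi_f : \Delta^{m-1} \times (\R^n)^m \times \Delta^{m-1} \times (\R^n)^m \longrightarrow \R^N \times \R^N,
$$
$$
\Phi_f(\lambda,\mathbf{x},\mu,\mathbf{x}') = \Bigl(\textstyle\sum_i \lambda_i f(x_i),\ \sum_j \mu_j f(x'_j)\Bigr),
$$
and restrict it to the open subset $U$ on which the $2m$ coordinates $x_1,\ldots,x_m,x'_1,\ldots,x'_m$ are pairwise distinct. By definition, the image $f(\R^n)$ fails to be $m$-avoiding exactly when $\Phi_f(U)$ meets the diagonal $D=\{(y,y)\}\subset \R^N\times\R^N$.

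Second, perform the dimension count. The manifold $U$ has dimension $2mn + 2(m-1)$, and $D$ has codimension $N$ in $\R^N\times\R^N$. If $\Phi_f$ is transverse to $D$ on $U$, then $\Phi_f^{-1}(D)$ is a submanifold of dimension $2mn + 2m - 2 - N$. Taking any $N \geq 2mn + 2m - 1$ makes this dimension negative, so the preimage is empty, and $f(\R^n)$ is $m$-avoiding. A similar count on each boundary stratum of $\Delta^{m-1}\times\Delta^{m-1}$ (corresponding to tuples with one or more zero weights, i.e.\ to an $m'$-avoiding problem with $m'<m$) gives the same conclusion up to raising $N$; this can be handled either by induction on $m$ or by absorbing all boundary strata into the same transversality requirement.

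Third, move the argument into the polynomial category. The Thom Transversality Theorem produces a residual set of smooth $f$ for which $\Phi_f \pitchfork D$. To promote this to polynomial maps, one applies the finite-dimensional (jet) transversality theorem to the family of polynomial maps $\R^n\to\R^N$ of degree at most some $d=d(n,m,N)$: since this family is ample enough to surject onto each relevant jet at every point, its generic member is transverse to $D$. The same family, for $N$ large, is also generically an embedding (the standard Whitney-type embedding argument in the polynomial category), so a generic polynomial map in this family is both an embedding and $m$-avoiding. The main subtlety I expect is the boundary-of-simplex issue together with the non-compactness of $\R^n$: transversality is cleanest on open manifolds, and the genericity statement has to survive the countable intersection coming from exhausting $\R^n$ by compacts and from stratifying the closed simplices — but both are routine, so no genuine obstacle arises.
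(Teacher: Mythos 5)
Your proof is correct and follows exactly the route the paper intends: the paper offers no details beyond citing the Thom transversality theorem, and your collision map, the dimension count $2mn+2m-2-N<0$, and the passage to a finite-dimensional family of polynomial maps (where interpolation at the $2m$ distinct points gives the needed surjectivity of the parametrized map onto the normal directions of the diagonal) are precisely the standard way to make that citation precise. The boundary strata of the simplices only lower the dimension of the source, so in fact no increase of $N$ is needed there, but your handling of them is sound.
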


This will be useful in the following situation which we will encounter later on:

\begin{defin}\label{def:simplres}
  Let $f : X \to Y$ be a continuous surjective map such that $|f^{-1}(y)|\leq m$ for all $y \in Y$.
  Let $i : X \to \R^n$ be an $m$-avoiding embedding. Define $X^{\Delta}$ by:
\begin{align*}
X^{\Delta} = \{ (y, z) \in Y \times \R^n : z \in conv(i(f^{-1}(y))) \}.
\end{align*}
The extension of $f$ to $X^{\Delta}$ is well-defined because of 
the $m$-avoiding condition.  Denote this extension as $f^{\Delta}$.

The \emph{simplicial resolution} of $(f, i)$ is the pair $(X^{\Delta}, f^{\Delta})$.
\end{defin}

Note that if $X$ is a compact subset of $\R^n$, then so is $X^{\Delta}$.
The property of simplicial resolutions that we will be most concerned with is the following:
\begin{proposition}
\label{simp_res}
For a simplicial mapping between simplicial complexes $f:X\to Y$, its simplicial resolution
$$
f^\Delta:X^\Delta\to Y
$$
is a homotopy equivalence.
\end{proposition}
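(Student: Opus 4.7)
The essential geometric content is that each fiber $(f^\Delta)^{-1}(y) = \{y\}\times \conv(i(f^{-1}(y)))$ is a geometric simplex in $\R^n$ by the $m$-avoiding hypothesis on $i$, and in particular contractible. My plan is to build an explicit homotopy inverse: a continuous section $s : Y \to X^\Delta$ of $f^\Delta$. The convex fiber structure then lets me deformation retract $X^\Delta$ onto $s(Y)$ by a fiberwise straight-line homotopy.

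Once $s$ is in hand, define
\[
H : X^\Delta \times [0, 1] \to X^\Delta, \qquad H\bigl((y, z), t\bigr) = \bigl(y,\ (1-t)z + t\, s_2(y)\bigr),
\]
where $s_2$ denotes the $\R^n$-component of $s$. Since $z$ and $s_2(y)$ both lie in the convex set $\conv(i(f^{-1}(y)))$, the line segment between them does too, so $H_t$ maps $X^\Delta$ into itself. Then $H_0 = \mathrm{id}_{X^\Delta}$, $H_1 = s \circ f^\Delta$, and $f^\Delta \circ s = \mathrm{id}_Y$ by construction, so $s$ is a homotopy inverse to $f^\Delta$.

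For the section I would use a partition-of-unity construction. The condition $|f^{-1}(y)| \leq m$ forces $f$ to be non-degenerate on each simplex, so for each simplex $\tau \subset X$ the restriction $f|_\tau$ is a homeomorphism onto $f(\tau)$, yielding a local lift $y \mapsto \tilde y_\tau \in \tau$ defined on $f(\tau)$. Choose a partition of unity $\{\phi_\tau\}$ on $Y$ with $\phi_\tau$ supported in $f(\tau)$, and set
\[
s(y) = \Bigl(y,\ \sum_\tau \phi_\tau(y)\, i(\tilde y_\tau)\Bigr).
\]
The second coordinate is a convex combination of elements of $i(f^{-1}(y))$, hence lies in $\conv(i(f^{-1}(y)))$, and each term $\phi_\tau(y)\, i(\tilde y_\tau)$ extends continuously by zero across the boundary of $f(\tau)$ because $\phi_\tau$ vanishes there.

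The main obstacle is producing the partition of unity with the right support properties and verifying that $s$ is genuinely continuous across the simplicial strata of $Y$, where the collection of active $\tau$'s jumps. Since $\{f(\tau)\}$ is a closed cover rather than an open one, one must pass to an open refinement (e.g.\ open stars in a barycentric subdivision) and lift the standard simplicial partition of unity, which is routine but requires care. An alternative that bypasses the explicit construction of $s$ is to invoke a Vietoris--Begle or Smale-type theorem: a proper surjection between CW complexes with contractible fibers is a homotopy equivalence, and both the hypotheses and the CW structure of $X^\Delta$ are readily verified from the simplicial data.
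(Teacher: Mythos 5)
Your main construction has a genuine gap: a continuous section $s$ of $f^\Delta$ need not exist, so the plan of deformation retracting $X^\Delta$ onto $s(Y)$ cannot be carried out in general. The obstruction is visible already in the simplest nontrivial case. Take $X$ to be the disjoint union of two $1$-simplices $[a_0,a_1]$ and $[b_0,b_1]$, and $Y$ the union of two $1$-simplices meeting in a single vertex $v$, with $f$ mapping each simplex of $X$ isomorphically onto one simplex of $Y$ and $f(a_1)=f(b_0)=v$. Over every $y\neq v$ the fiber of $f^\Delta$ is the single point $(y, i(\tilde y))$, so any section is forced off $v$; but the two one-sided limits at $v$ are $i(a_1)$ and $i(b_0)$, which are distinct since $i$ is an embedding. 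Hence no continuous section exists, even though $f^\Delta$ (which just collapses the segment $\conv(i(a_1),i(b_0))$ to $v$) is plainly a homotopy equivalence. The same failure occurs in exactly the situation the paper needs, where $\pi$ is injective over a dense open set and has multiple preimages over the self-intersection locus. Your partition-of-unity step is where this surfaces: for the term $\phi_\tau(y)\,i(\tilde y_\tau)$ to make sense you need $\{\phi_\tau>0\}\subseteq f(\tau)$, hence $\{\phi_\tau>0\}\subseteq\mathrm{int}_Y f(\tau)$, and in the example above the interiors of the sets $f(\tau)$ fail to cover the vertex $v$, so no such partition of unity with $\sum_\tau\phi_\tau\equiv 1$ exists. (Your observation that finite fibers force $f$ to be nondegenerate on each simplex is correct; it is the global patching that breaks down.)

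Your fallback is the right proof, and it is essentially the paper's: the paper disposes of the proposition in one line by observing that $f^\Delta$ has contractible (indeed convex) point-inverses and invoking the resulting equivalence. To make that rigorous you should argue as you suggest at the end — $f^\Delta$ is a proper surjection of compact polyhedra with contractible point-inverses, so it is a homotopy equivalence by Smale's theorem (or the Vietoris--Begle theorem plus Whitehead), or alternatively by induction over the closed simplices of $Y$ using the gluing lemma, showing $(f^\Delta)^{-1}(\sigma)\to\sigma$ is an equivalence for each simplex $\sigma$. Any of these routes replaces the nonexistent global section and completes the proof.
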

\begin{proof}
  Indeed, in this situation, the mapping is a fibration with contractible fibers.
  \end{proof}

\section{Proof of the main theorem}\label{Proof}

The final observation we will need concerns the relative sizes of
trees across dimension and duality.  First, an Alexander duality for trees.  

\begin{proposition}\cite[Proposition 6.1]{cubes}
Let $X$ and $Y$ be dual $d$-dimensional complexes and $f^*$ be the inclusion reversing bijection from cells of $X$ to cells of $Y$. Furthermore let $T \subseteq X_i$ and $U = \{f^* | f \in X_i \backslash T \}$.  Then $T$ is an $i$-tree of $X$ if and only if $U$ is a $(d-i)$-tree of $Y$.  
  \end{proposition}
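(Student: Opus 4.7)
The plan is to translate the tree conditions on $T$ and $U$ into matroid-theoretic conditions on the boundary matrices of $X$ and $Y$, and to deduce the equivalence from matroid duality together with the algebraic duality between $X$ and $Y$.

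First I would set up the algebraic duality precisely. The inclusion-reversing bijection $*$ identifies $i$-cells of $X$ with $(d-i)$-cells of $Y$, so I pick a common index set $E = X_i = Y_{d-i}$. Because the boundary operator of $X$ equals the coboundary operator of $Y$ cell-by-cell, the matrix of $\partial^X_{i+1}:C_{i+1}(X)\to C_i(X)$ (whose rows are indexed by $E$) is the transpose of the matrix of $\partial^Y_{d-i}:C_{d-i}(Y)\to C_{d-i-1}(Y)$; similarly $\partial^X_i$ is the transpose of $\partial^Y_{d-i+1}$.

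Next I would reformulate the tree condition matroid-theoretically. Working over $\Q$, $T \subseteq X_i$ is an $i$-tree of $X_i$ if and only if the columns of $\partial^X_i$ indexed by $T$ form a basis of the column space of $\partial^X_i$: linear independence of these columns is equivalent to $H_i(T;\Q) = 0$, while spanning the column space is equivalent to $\operatorname{rank} H_{i-1}(T;\Q) = \operatorname{rank} H_{i-1}(X_i;\Q)$, which forces $|H_{i-1}(T;\Z)|$ to be finite; the cardinality condition then follows automatically from the rank count. In matroid language, the $i$-trees of $X_i$ are exactly the bases of the column matroid $\mathcal{M}_X$ of $\partial^X_i$ on $E$, and analogously the $(d-i)$-trees of $Y_{d-i}$ are the bases of the column matroid $\mathcal{M}_Y$ of $\partial^Y_{d-i}$.

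The heart of the argument is that $\mathcal{M}_X$ and $\mathcal{M}_Y$ are matroid-dual. For two matrices with a common column index set, their column matroids are matroid-dual if and only if their row spaces are orthogonal complements in $\Q^E$. The chain-complex relation $\partial^X_i \circ \partial^X_{i+1} = 0$ places $\operatorname{image}(\partial^X_{i+1}) \subseteq \ker(\partial^X_i)$, with equality under the vanishing of $\beta_i(X)$ that holds in the relevant setting (e.g.\ the cube is contractible). Since the row space of $(\partial^X_{i+1})^T = \partial^Y_{d-i}$ is exactly $\operatorname{image}(\partial^X_{i+1})$ and the null space of $\partial^X_i$ is the orthogonal complement of its row space, one obtains $\mathcal{M}_Y = \mathcal{M}_X^{*}$. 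Matroid duality then yields: $T \subseteq E$ is a basis of $\mathcal{M}_X$ if and only if $E \setminus T$ is a basis of $\mathcal{M}_Y$, and translating back through $*$ gives the desired statement for $T$ and $U$.

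The main obstacle is the integral refinement. The matroid duality above is cleanest over $\Q$, whereas the tree definition permits torsion in $H_{i-1}(T;\Z)$. To upgrade to the $\Z$-statement I would either appeal to an integral analogue of matroid duality, via a Smith-normal-form identity pairing the invariant factors of the corresponding submatrices of $\partial^X_i$ and $\partial^Y_{d-i}$, or construct a direct Alexander-duality-type isomorphism between the torsion subgroups of $H_{i-1}(T;\Z)$ and $H_{d-i-1}(U;\Z)$; either route shows the finiteness conditions are exchanged in tandem. The cardinality condition $f_{d-i}(U) = f_{d-i}(Y) - \beta_{d-i}(Y)$ then drops out of the complementary count $|U| = |E| - |T|$ combined with the Euler-characteristic matching between $X$ and $Y$ under the duality.
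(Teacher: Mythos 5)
The paper itself gives no proof of this proposition---it is quoted verbatim from \cite[Proposition 6.1]{cubes}---so there is no in-text argument to compare against; I am judging your proposal on its own merits. Your core argument is correct and is essentially the standard linear-algebraic proof: identify $i$-trees of $X_i$ with bases of the column matroid of $\partial^X_i$, use $\partial^X_{i+1}=(\partial^Y_{d-i})^T$ to see that the row space of $\partial^Y_{d-i}$ is $\mathrm{im}(\partial^X_{i+1})$, and conclude matroid duality from $\mathrm{im}(\partial^X_{i+1})=\ker(\partial^X_i)$. Do note that this last equality is a genuine hypothesis, not a convenience: it is the statement $\tilde\beta_i(X)=0$, and at the extreme dimensions ($i=0$ or $i=d$ for a sphere) it only holds for the augmented chain complex. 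In the setting of the cited source the dual complexes are cellulations of a sphere, so this is available, but it should be recorded as part of what ``dual complexes'' must mean for the proposition to be true.

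Your final paragraph, however, is solving a problem that does not exist. All three conditions in the definition of a $d$-spanning tree are already detected over $\Q$: since $T\subseteq X_i$ has no $(i+1)$-cells, $H_i(T;\Z)=\ker(\partial^X_i|_T)$ is a subgroup of a free abelian group, hence free, hence zero if and only if the columns indexed by $T$ are independent over $\Q$; the condition $|H_{i-1}(T;\Z)|<\infty$ is by definition the rational condition $\beta_{i-1}(T)=0$; and the facet count is automatic for a matroid basis. So the rational matroid duality already completes the proof, and no Smith-normal-form or torsion-pairing upgrade is required. (The statement you gesture at---that the torsion subgroups of $H_{i-1}(T;\Z)$ and $H_{d-i-1}(U;\Z)$ are isomorphic---is true and is proved in \cite{cubes}, but it is a refinement of the proposition, not a step needed to establish it.)
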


Second, spanning trees of a complex $\Sigma$ in adjacent dimensions $\Sigma_i$, $\Sigma_{i+1}$ have complementary size.  This result appears, e.g., as Proposition 2.6 of \cite{cubes}.  There the proof is formulated in terms of the long exact sequence for relative homology.  We give an alternative argument here for polytopes that relates more directly to our proof of the main theorem.

\begin{proposition}
\label{tree_to_beti}
  Let $P$ be a convex polytope in $\R^n$,  $P_k$  its $k$-skeleton and $T$ a $k$-dimensional spanning tree of $P_k$. Then $f_k(T) = \beta_{k-1}(P_{k-1})$.
\end{proposition}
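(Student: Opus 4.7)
The plan is to rewrite both sides of the identity in terms of ranks of cellular chain groups and boundary maps, and match them via the long exact sequence of the pair $(P_k, P_{k-1})$. The only geometric input doing real work is that $P$, being a convex polytope, is contractible.

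First I would record the consequence of convexity that is needed. Let $C_\bullet$ denote the cellular chain complex of $P$, with boundaries $\partial_i : C_i \to C_{i-1}$. The groups $C_{k-2}, C_{k-1}, C_k$ and the maps $\partial_{k-1}, \partial_k$ appearing in $P_k$ coincide with those in $P$, so $H_{k-1}(P_k) = H_{k-1}(P) = 0$. In particular, the hypothesis $\beta_{k-1}(\Sigma) = 0$ for a spanning tree to exist is met by $\Sigma = P_k$, so the statement is well-posed.

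Next I would feed the pair $(P_k, P_{k-1})$ into its long exact sequence in homology:
\[
H_k(P_{k-1}) \to H_k(P_k) \to H_k(P_k, P_{k-1}) \to H_{k-1}(P_{k-1}) \to H_{k-1}(P_k).
\]
The leftmost term vanishes since $P_{k-1}$ has no $k$-cells, and the rightmost term vanishes by the previous paragraph. The middle term $H_k(P_k, P_{k-1})$ is free abelian on the $k$-cells of $P$, so has rank $f_k(P_k)$. Taking ranks in the resulting short exact sequence yields
\[
\beta_{k-1}(P_{k-1}) = f_k(P_k) - \beta_k(P_k).
\]

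To finish, I would invoke the counting axiom~\eqref{count-condn} in the definition of a $k$-dimensional spanning tree, applied with $\Sigma = P_k$: this says exactly $f_k(T) = f_k(P_k) - \beta_k(P_k)$, and combining with the display above gives $f_k(T) = \beta_{k-1}(P_{k-1})$. I do not foresee any real obstacle; the mild subtlety is simply noticing that contractibility of $P$ simultaneously ensures the spanning tree hypothesis on $P_k$ and collapses the long exact sequence to a short one. This is the polytopal counterpart of the relative-homology proof in \cite{cubes}, trading that general argument for the direct observation that the ambient polytope is contractible.
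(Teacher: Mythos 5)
Your proof is correct, but it takes a genuinely different route from the paper's --- in fact it is essentially the relative-homology argument of Proposition 2.6 of \cite{cubes}, which the authors cite immediately before the proposition and explicitly set aside in favor of ``an alternative argument \ldots for polytopes.'' The paper's proof is an Euler characteristic computation: it starts from the counting axiom \eqref{count-condn}, uses shellability of $P_k$ (so that $P_k$ is homotopy equivalent to a wedge of $k$-spheres) to write $\chi(P_k)=1+(-1)^k\beta_k(P_k)$ and likewise for $P_{k-1}$, and compares these with $\chi(P_k)=\chi(P_{k-1})+(-1)^k f_k(P)$ to extract $f_k(P_k)-\beta_k(P_k)=\beta_{k-1}(P_{k-1})$. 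You reach the same identity from the short exact sequence $0\to H_k(P_k)\to H_k(P_k,P_{k-1})\to H_{k-1}(P_{k-1})\to 0$, with the two outer vanishings justified by dimension and by contractibility of $P$ through the shared cellular chain groups in degrees $\le k$. Your geometric input is weaker: you need only that $P$ is contractible, whereas the paper invokes shellability of polytope skeleta (a nontrivial fact); on the other hand, the wedge-of-spheres Euler-characteristic bookkeeping is exactly what the paper reuses in its final Corollary on piles of cubes, which is presumably why the authors prefer the version that ``relates more directly'' to the rest of their argument. Both proofs silently assume $k\ge 2$ (at $k=1$ one must read $\beta_0$ as reduced), which is harmless since the intended application has $k\ge 3$.
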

\begin{proof}
  
  By definition, we have
  $$f_k(T) = f_k(P_k) - \beta_k(P_k).$$

  Because $P_k$ is shellable, it is homotopy equivalent to a wedge of spheres. Thus, its Euler characteristics is
\begin{align*}
  \chi(P_k) = 1 + (-1)^{k} \beta_k(P_k).
\end{align*}
We may also express the Euler characteristic as an alternating sum of the numbers of faces in each dimension:
\begin{align*}
\chi(P_k) = \sum_{i = 0}^{k} (-1)^{i} f_i(P_k).  
\end{align*}
Using the same relations for $P_{k-1}$ and the fact that $\chi(P_k) = \chi(P_{k-1}) + (-1)^{k} f_k(P)$, one gets the desired result.

\end{proof}
Specializing to the case of the cube, we conclude that the following are equinumerous:
\begin{itemize}
\item  the size of a $k$-dimensional tree of $\Cube_{n, k}$
\item the size of a $(n-k)$-dimensional tree of $\Cross_{n, n-k}$ 
\item  the size of the complement of a $(k-1)$-dimensional tree of $\Cube_{n, k-1}$
\item  the size of the complement of a $(n-k-1)$-dimensional tree of $\Cross_{n, n-k-1}$
\end{itemize}
  where $\Cross_{n, k}$ denotes the $k$-dimensional skeleton of the $n$-dimensional cross-polytope and the complements are all taken within the appropriate skeletons.  Numerically, this gives:

\begin{eqnarray*}
f_k(T(\Cube_{n,k})) 
& = & f_{n-k}(T(\textrm{Cross}_{n, n-k})) \\
& & \\
& = & {n \choose {k-1}} 2^{n-k+1} - f_{k-1}(T(\Cube_{n,k-1})) \\
& & \\
& = &  {n \choose k+1} 2^{n-k-1} - f_{n-k-1}(T(\textrm{Cross}_{n, n-k-1}).
\end{eqnarray*}

We are now ready to prove our main result, Theorem~\ref{main}.  

\goldbach*

\begin{proof}

  First, assume $k < n$.

  As discussed above, by Alexander duality, we have:
  $$\beta_{k-1}(\Cube_{n, k-1}) = \beta_{n-k-1}(\Cross_{n, n-k-1})$$

  The $(n-k-1)$-skeleton of $\Cross_n$ consists of simplices that are
  convex hulls of $(n-k)$ of its vertices.  These simplices can be defined as follows.   For any $I  = \{i_1, \ldots, i_k \, | \, 1 \leq i_1 < \ldots < i_k \leq n\}$, let $L_I$ denote the subspace:
 $$L_I = \{x_{i_1} = \ldots = x_{i_k} = 0\}.$$  The faces of the $(n-k-1)$ skeleton of $\Cross_n$ are intersections of the $L^1$-sphere with subspaces of the form $L_I$. 
  We will denote the union of all such $L_I$ by $\Coor_k$, the codim-$k$ coordinate arrangement.
Now, consider the suspension of the intersection of the $L^1$-sphere and $\Coor_k$. The suspension is homeomorphic to the one point compactification of $\Coor_k$, ${\Coor_k}^*$. Thus, $\beta_{n-k-1}(\Cross_{n, n-k-1}) = \beta_{n-k}({\Coor_k}^*)$.

Let $S = \{(x_1, \ldots, x_n) \in \R^n \, | \, \sum_{i=0}^n x_i = 0\}$ and
let $$\pi : \Coor_k \rightarrow S$$ be the projection of the coordinate arrangement to $S$ along the diagonal.  
Note that the image $\pi(\Coor_k)$ lands inside $\mathcal{A}_{n, k}$.
Furthermore, this extends continuously to one point compactifications. Slightly abusing notation, we continue to use $\pi$ to refer to this extension.

We are now in the situation of Definition \ref{def:simplres} -- we may safely assume that the one-point compactifications of our arrangements are triangulated subsets of spheres in Euclidean space.

In the case that $n < 2 k$, $\pi$ is a homeomorphism. However, when $n \geq 2 k$, it is not: the point where several $k$-diagonals intersect has multiple preimages. The number of preimages is bounded from above by $m=\lfloor n/k\rfloor$.

Consider the simplicial resolution of $(\pi, i)$, $(\Coor_k^*)^\Delta$. Using Theorem \ref{simp_res}, $\pi$ is a homotopy equivalence. Thus, $\beta_{n-k}((\Coor_k^*)^{\Delta}) = \beta_{n-k} (A_{n, k}^*)$.
All simplices added while taking the simplicial resolution are of dimension at most $n-k-2$: indeed, the dimension of the cells glued over the preimages of $l$-fold intersections of the $k$-diagonals is equal to
$$
n-l(k-1)+(l-1)
$$
(the first summand is the dimension of the $l$-fold intersection; the second, of the simplices over each point of the self-intersection). As $l\geq 2$ and $k\geq 3$, we obtain the desired bound.

Therefore, the cells added to $\Coor_k^*$ to obtain the simplicial resolution do not affect homology in dimension $n-k$. Therefore, $\beta_{n-k}(\Coor_k^*) = \beta_{n-k} (\mathcal{A}_{n, k}^*)$.
Finally, by Alexander duality, $\beta_{n-k} (\mathcal{A}_{n, k}^*) = \beta_{k-2} (\M)$ and  $f_k(T) = \beta_{k-2} (\M)$ as desired.

For $k = n$, the $n$-dimensional hypercube is an $n$-dimensional spanning tree of itself; $f_n(T) = 1$. The $n^{th}$ no $n$-equal space of $\R$ is homotopy equivalent to an $(n-2)$-dimensional sphere, so $\beta_{n-2} (\mathcal{M}_{n,n}) = 1$. Thus, the claim holds for all $k \leq n$.

\end{proof}

\section{Piles of Cubes} \label{piles}
The identity in Theorem \ref{main} can be generalized to the following situation. Consider the { comb} no-$k$-equal subspace arrangement defined as follows:
\begin{defin}
  Let $A_j\subset \R , j=1,\ldots,n$ be finite non-empty subsets of the reals.
  
  The \emph{$A$-comb $k$-equal arrangement} of $\R^n$ consists of all subspaces of the form $\{x_{i_1} - a_{i_1} = \cdots = x_{i_k} - a_{i_k}\}$ for $1 \leq i_1 < \ldots < i_k \leq n$ and $a_{i_j} \in A_{i_j}$.
  
  The \emph{$A$-comb no $k$-equal space} of $\R$ is the complement in $\R^n$ of the $A$-comb $k$-equal arrangement.

  We will denote this aforementioned arrangement as $\Delta^A_k\subset\R^{n-1}$, and its complement as $M^A_k$.

\end{defin}

  Notice that we recover the no $k$-equal arrangement when all the $A_j$s are $\{ 0 \}$.

  Define a {\em $k$-dependence} between the sets $A_j$ as a collection of $k$ distinct pairs $\{x_{j_1},x'_{j_1}\}\in A_{j_1}, \ldots, \{x_{j_k},x'_{j_k}\}\in A_{j_k}$
  such that $x_{j_i}-x'_{j_i}$ coincide for all $i=1,\ldots,k$. 

  \begin{defin}
 A {\em pile of cubes of size $\prod_{j=1}^n N_j$} is the (cubical) CW complex consisting of the parallelogram $[0,N_1]\times [0,N_2]\times\cdots\times [0,N_n]$ naturally stratified by the integer grid.
  \end{defin}
  
Theorem~\ref{general} can now be written more precisely as:

\begin{theorem}
\label{gen}
  Assuming that the there are no $k$-dependences between the $A_j$s, the rank of the $(k-2)-dimensional$ homology of $M^A_k$
  is equal to the number of facets in a $k$-dimensional spanning tree of the $k$-skeleton of the pile of cubes of size $\prod_{j=1}^n n_j$.
\end{theorem}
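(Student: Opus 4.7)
The plan is to adapt the proof of Theorem~\ref{main} stage by stage; I will describe each step, flagging where the comb structure forces a modification.

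First, I would extend Proposition~\ref{tree_to_beti} from a convex polytope to the pile of cubes $P = [0,n_1]\times\cdots\times[0,n_n]$. That proof uses only that the ambient complex is contractible (so $\chi(P_j) = 1 + (-1)^j \beta_j(P_j)$ on every skeleton) together with the additive relation $\chi(P_k) = \chi(P_{k-1}) + (-1)^k f_k(P)$; both are available for the pile, which is just a rectangular box. Consequently $f_k(T) = \beta_{k-1}(P_{k-1})$ for any $k$-spanning tree $T$ of $P_k$.

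Second, I would introduce the comb analogue of the coordinate arrangement,
$$\Coor_k^A \;=\; \bigcup_{|I|=k,\,\vec a\in\prod_{j\in I}A_j} L_I^{\vec a},\qquad L_I^{\vec a}=\{x\in\R^n:x_{i_j}=a_{i_j}\text{ for all }i_j\in I\},$$
and establish the identification $\beta_{k-1}(P_{k-1}) = \beta_{n-k}((\Coor_k^A)^*)$. In the original proof this is obtained by cube--cross polytope duality followed by a suspension. Here the natural analogue is a dual cellulation $P^\dagger$ of the pile: assign to each $j$-face $\sigma$ of $P$ the convex hull of the centres of the unit cubes containing $\sigma$. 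For interior $\sigma$ this dual cell is an $(n-j)$-cross polytope, and the union $P^\dagger$ is a pile of cross polytopes glued along shared vertices, truncated along $\partial P$. An Alexander duality extending \cite[Prop.\ 6.1]{cubes} then yields $\beta_{k-1}(P_{k-1}) = \beta_{n-k-1}(P^\dagger_{n-k-1})$, and a one-point-compactification/suspension step identifies $P^\dagger_{n-k-1}$ with $\Coor_k^A$ (first taking $A_j$ to be the cube centres along the $j$-axis, and then reducing general $A_j$ to this case by a genericity/deformation argument that uses the no-$k$-dependence hypothesis).

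Third, I would run the projection and simplicial-resolution argument essentially verbatim. The projection $\pi:\Coor_k^A\to S$ along the diagonal $\R\vec 1$ sends each $L_I^{\vec a}$ into $\{x_{i_j}-a_{i_j}\text{ common}\}\cap S\subset\Delta^A_k$. The fibre of $\pi$ over $y\in\Delta^A_k$ consists of those $t\in\R$ with $\#\{i:y_i+t\in A_i\}\geq k$; the no-$k$-dependence hypothesis is exactly what is needed to bound this cardinality by $m=\lfloor n/k\rfloor$ and to keep the self-intersection strata of $\Delta^A_k$ at their expected codimensions, so the dimension estimate of the original proof ($k\geq 3$ forcing the added cells to have dimension at most $n-k-2$) carries over. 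Proposition~\ref{simp_res} then gives $\beta_{n-k}((\Coor_k^A)^*) = \beta_{n-k}((\Delta^A_k)^*)$, and a final Alexander duality inside $S$ yields $\beta_{n-k}((\Delta^A_k)^*)=\beta_{k-2}(M^A_k)$; chaining the identities proves the theorem. I expect the main obstacle to be the middle step: producing the dual cellulation for the pile and rigorously identifying its $(n-k-1)$-skeleton with $\Coor_k^A$ requires handling boundary truncations that are invisible in the single-cube case.
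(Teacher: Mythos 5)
Your overall architecture is the paper's: Proposition~\ref{tree_to_beti} for the pile at the front, a duality identifying $\beta_{k-1}$ of the $(k-1)$-skeleton of the pile with $\beta_{n-k}$ of a compactified comb coordinate arrangement in the middle, then the diagonal projection, simplicial resolution (with the no-$k$-dependence hypothesis controlling the dimension of the added cells), and Alexander duality at the end. Steps one and three are exactly what the paper does. The difference is the middle step, and it is precisely the place you flag as the main obstacle. You propose dualizing the pile $P$ itself (a ball), truncating the dual cells along $\partial P$, and then suspending; this is genuinely awkward because, unlike the single-cube case where $\Cube_{n,k-1}\subset\partial\Cube_n$ lives on an $(n-1)$-sphere and dualizes cleanly to $\Cross_{n,n-k-1}$, the $(k-1)$-skeleton of a pile contains interior faces, so there is no ambient sphere on which Proposition 6.1 of \cite{cubes} applies directly. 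The paper sidesteps this entirely by compactifying \emph{first}: adding the large open cell at infinity to the pile $B$ gives a regular cubical CW structure on $S^n$, whose dual CW structure is the compactified grid complex $A$ (products of points of the $A_j$ with the complementary intervals, plus the point at infinity). The $(n-k)$-skeleton of $A$ is on the nose the one-point compactification of your $\Coor_k^A$, and duality of the two cell structures on $S^n$ gives $\beta_{k-1}(B_{k-1})=\beta_{n-k}(A_{n-k})$ with no truncation, no suspension, and no deformation of the $A_j$ (the no-$k$-dependence hypothesis is not needed for this step, only for the resolution). Two minor corrections to your step three: the fibre-cardinality bound $\lfloor n/k\rfloor$ does not carry over verbatim when some $|A_i|>1$, since the same index can contribute to several values of $t$; but the simplicial resolution only needs some finite bound $m$, which is clear. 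With the middle step replaced by the paper's compactify-then-dualize argument, your outline becomes a complete proof.
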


The key component of the proof is the following result:

\begin{proposition}
  The rank of the $(n-k)$-th integer homology of the one-point compactification of the
  arrangement $\Delta^A_{k}$ equals the rank of the $(k-1)$-st integer homology of the $(k-1)$-st skeleton of the
  pile of cubes of size $\prod_{j=1}^n n_j$.
\end{proposition}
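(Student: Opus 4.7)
The plan is to adapt, step by step, the chain of homology identifications used in the proof of Theorem~\ref{main}, replacing the hypercube $\Cube_n$ with the pile $P = \prod_{j=1}^n[0,n_j]$ and the coordinate arrangement $\Coor_k$ with its affine analogue
\[
\Coor_k^A \;=\; \bigcup_{I,\,a}\,\{\,x_i = a_i : i \in I\,\},
\]
where $I$ ranges over $k$-element subsets of $\{1,\dots,n\}$ and each $a_i$ over $A_i$. Project $\Coor_k^A$ along the diagonal onto the hyperplane $S=\{\sum x_i = 0\}$; the image lies inside $\Delta^A_k$, exactly as in the proof of Theorem~\ref{main}.

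Step one is to establish the Alexander/suspension identification $\beta_{k-1}(P_{k-1}) = \beta_{n-k}((\Coor_k^A)^*)$. In the proof of Theorem~\ref{main} this combined the cell-by-cell duality between $\Cube_n$ and $\Cross_n$ with the suspension identification of the $(n-k-1)$-skeleton of $\Cross_n$ with the intersection of the $L^1$-sphere and $\Coor_k$. For the pile, the product decomposition provides, cube-by-cube, a local cross-polytope dual; gluing these local duals along shared faces produces a CW structure whose $(n-k-1)$-skeleton is realized inside a sphere as pieces of $\Coor_k^A$, and the suspension step of the main theorem applies verbatim. Step two is the projection/simplicial-resolution argument: pass to one-point compactifications, embed via an $m$-avoiding generic polynomial with $m=\lfloor n/k\rfloor$, and form the simplicial resolution of $\pi$. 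By Proposition~\ref{simp_res} the resolved space is homotopy equivalent to $(\Delta^A_k)^*$, and it suffices to show that the cells added in forming the resolution have dimension strictly less than $n-k$; a final Alexander duality in $S^{n-1}$ then identifies $\beta_{n-k}((\Delta^A_k)^*)$ with $\beta_{k-2}(M^A_k)$.

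The main obstacle is the dimension count in Step two, and this is precisely where the no-$k$-dependence hypothesis enters. An $\ell$-fold collision of $\pi$ corresponds to $\ell$ distinct affine flats $\{x_I = a_I\}$ in $\Coor_k^A$ whose projections to $S$ coincide; a $k$-dependence among the $A_j$'s would permit such a collision to occur at codimension strictly smaller than the expected $\ell(k-1)$, which would raise the dimension of the attached $(\ell-1)$-simplices in the resolution beyond $n-k-1$. Under the no-$k$-dependence assumption the $\ell$-fold intersection retains codimension at least $\ell(k-1)$, so the cells glued over it have dimension at most $n-\ell(k-1)+(\ell-1)\le n-k-1$ whenever $\ell\ge 2$ and $k\ge 3$. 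The added cells therefore contribute nothing to homology in degree $n-k$, and combining the three steps yields the claimed equality.
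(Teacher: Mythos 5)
Your overall architecture --- a duality identifying $\beta_{k-1}(P_{k-1})$ with $\beta_{n-k}$ of the compactified affine arrangement $\Coor_k^A$, followed by the diagonal projection and simplicial resolution --- is the same as the paper's, and your Step two is essentially the paper's argument, including the correct role of the no-$k$-dependence hypothesis: it is exactly what prevents two distinct flats $\{x_i=a_i: i\in I\}$ and $\{x_i=a'_i: i\in I\}$ from having the same image under $\pi$, so that $\ell$-fold collisions occur only over genuine $\ell$-fold intersections of codimension $\ell(k-1)$ and the glued cells have dimension at most $n-\ell(k-1)+(\ell-1)\le n-k-1$. The gap is in Step one. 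The pile $P$ is not the boundary complex of a convex polytope, so there is no polar dual to invoke, and ``gluing local cross-polytope duals along shared faces'' does not produce the dual you need: in the dual cell structure an $n$-cube of the pile dualizes to a single \emph{vertex}, not a cross-polytope (the cross-polytope in the proof of Theorem~\ref{main} comes from polytope polarity of boundary $(n-1)$-spheres, a different duality), and gluing polar duals of adjacent cubes along an edge already fails to give a sphere for a $2\times 1$ pile in the plane. Worse, $\Coor_k^A$ is an affine, non-central arrangement: it is not a cone over its intersection with a sphere, so its one-point compactification is not a suspension, and the suspension step of Theorem~\ref{main} does not ``apply verbatim.''

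The paper circumvents both problems by working in $S^n$ rather than $S^{n-1}$. It equips $S^n$ with two explicitly dual regular CW structures: the pile refined by the integer grid together with one large open $n$-cell at infinity, and the ``grid complex'' whose cells are products of points of the $A_j$ and of the complementary open intervals, together with the point at infinity. Cell-by-cell duality of these two structures gives $\beta_{k-1}(B_{k-1})=\beta_{n-k}(A_{n-k})$ directly, and the $(n-k)$-skeleton of the grid complex (cells with at least $k$ point factors) is precisely the one-point compactification of your $\Coor_k^A$, so no suspension is needed and the degree bookkeeping comes out right. If you replace your Step one with this construction --- or any other correct proof that the compactified pile and the compactified grid complex are dual CW spheres --- the rest of your argument goes through. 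Note also that the final Alexander duality down to $\beta_{k-2}(M^A_k)$ belongs to the deduction of Theorem~\ref{general}, not to this proposition.
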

\begin{proof}
  We start with a natural construction of a pile of cubes in $\R^n$: pick one point in the interior of the $n_j$ open intervals
  into which $A_j$ partitions $\R$. We will denote this subset as $B_j$. The product of the collections of the $n_j$ closed
  intervals in the $j$-th factor of $\R^n$ defines a pile of cubes $B$ of size $\prod_{j=1}^n n_j$.

  We consider our Euclidean
  $n$-space $\R^n\subset S^n$ as an open subset of its one-point compactification. Adding the large open cell at infinity to
  the pile of cubes $B$ defines a (cubical) regular CW complex structure on the $n$-sphere.

  On the other hand, we have a natural CW complex obtained by taking the products of the points of the $A_j$s
  and the intervals into which $A_j$s split the real line. This CW complex can be compactified into a
  finite regular CW complex by adding a point at infinity; we will denote this complex as $A$.
  Both $A$ and $B$ are homeomorphic to the $n$-sphere.

  Importantly, these two CW complexes are dual: for each $k$ cell of one there exists exactly one $(n-k)$ cell
  of the other, intersecting  at a unique point, and the boundary operators on these two complexes are automatically dual to each other.

  This implies that the $k$-th homology of the $k$-skeleton of one of these CW-complexes is isomorphic to the $(n-k-1)$-st homology of the $(n-k-1)$-skeleton of the other. Thus, the $(k-1)$-st homology of the $(k-1)$-skeleton of $B$ is isomorphic to the $(n-k)$-th homology of the $(n-k)$-skeleton of $A$.
  
  Analogous to the proof of Theorem~\ref{main}, we consider the projection of $A$ into $S = \{(x_1, \ldots, x_n) \in \R^n \, | \, \sum_{i=0}^n x_i = 0\}$. The image of this projection lives in $\Delta^A_{k}$. We may once again extend this to a one point compactification. Once more consider the simplicial resolution of this projection. The fact that there are no $k$-dependences between the $A_j$s ensures that the dimension of the cells added in the construction of the simplicial resolution are at most $n-k-2$. Thus, the $(n-k)$-th homology of the $(n-k)$-skeleton of $A$ is isomorphic to the $(n-k)$-th homology of the one point compactification of $\Delta^A_{k}$.
\end{proof}

The rest of the proof of Theorem~\ref{gen} follows from Proposition~\ref{tree_to_beti} at the beginning and Alexander duality at the end.

\begin{cor}
Assuming that the there are no $k$-dependences between the $A_j$s, the rank of the $(k-2)-dimensional$ homology of $M^A_k$, $\beta_{k-2}$, satisfies the following:
\[1+(-1)^{k-1} \beta_{k-2} = \prod_{j=1}^n{(n_j+1)} \left(\sum_{\ell=0}^{k-1} (-1)^\ell \sum_{|I|=\ell}\prod_{i\in I}\frac{n_i}{n_i+1}   \right)\]
where the $I$ are subsets of $\{1, \dots, n\}$.
\end{cor}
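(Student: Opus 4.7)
The plan is to chain three reductions: the Betti number $\beta_{k-2}(M^A_k)$ equals a spanning tree size (Theorem~\ref{gen}), which equals a Betti number of a skeleton of the pile of cubes (Proposition~\ref{tree_to_beti}), which in turn can be read off from the Euler characteristic via a direct face count.

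First, by Theorem~\ref{gen}, $\beta_{k-2}(M^A_k) = f_k(T)$, where $T$ is any $k$-dimensional spanning tree of the $k$-skeleton of the pile of cubes $\Pi$ of size $\prod_{j=1}^n n_j$. Next, I would observe that the proof of Proposition~\ref{tree_to_beti} transfers essentially verbatim from a convex polytope to $\Pi$. The two ingredients needed are (i) the Euler-characteristic identity $\chi(\Pi_k) = \chi(\Pi_{k-1}) + (-1)^k f_k(\Pi)$, which is purely combinatorial, and (ii) the concentration $\chi(\Pi_j) = 1 + (-1)^j \beta_j(\Pi_j)$ for $j = k-1, k$. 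The latter was obtained in the reference proof via shellability; here I would instead note that $\Pi$ is contractible (a product of intervals), so comparing truncated cellular chain complexes yields $H_i(\Pi_j) = H_i(\Pi) = 0$ for $0 < i < j$. This gives $f_k(T) = \beta_{k-1}(\Pi_{k-1})$.

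Then I would compute $\chi(\Pi_{k-1})$ in two ways. From the Betti-number vanishing just established,
\[
\chi(\Pi_{k-1}) = 1 + (-1)^{k-1}\beta_{k-1}(\Pi_{k-1}) = 1 + (-1)^{k-1}\beta_{k-2}(M^A_k),
\]
which matches the left-hand side of the corollary. On the combinatorial side, a $d$-cell of $\Pi$ is specified by choosing a $d$-subset $I \subseteq \{1,\ldots,n\}$ of coordinates that run along an edge of the integer grid, and then selecting, for each coordinate $j$, which edge (if $j \in I$, giving $n_j$ options) or vertex (if $j \notin I$, giving $n_j+1$ options) it occupies, so
\[
f_d(\Pi) = \sum_{|I|=d} \prod_{i \in I} n_i \prod_{j \notin I}(n_j+1) = \prod_{j=1}^n (n_j+1) \sum_{|I|=d} \prod_{i \in I} \frac{n_i}{n_i+1}.
\]
Summing $(-1)^d f_d(\Pi)$ over $d = 0,\ldots,k-1$ factors out $\prod_j(n_j+1)$ and reproduces exactly the bracketed sum on the right-hand side of the corollary.

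The only delicate point is confirming that Proposition~\ref{tree_to_beti} applies to a pile of cubes, since it was stated for convex polytopes. Replacing the shellability step by the truncated-chain-complex argument above is routine, and once that substitution is in place the remainder of the proof is straightforward bookkeeping.
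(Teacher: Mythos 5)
Your proposal is correct and follows the same route as the paper: reduce to the tree size via Theorem~\ref{gen}, convert to $\beta_{k-1}$ of the $(k-1)$-skeleton via Proposition~\ref{tree_to_beti}, and compute the Euler characteristic of that skeleton both homologically and by counting cells. Your extra care in rejustifying Proposition~\ref{tree_to_beti} for the pile of cubes (replacing shellability by contractibility of $\Pi$ plus the fact that skeleta inherit homology below the top dimension) is a worthwhile refinement of a point the paper glosses over, but it does not change the argument's structure.
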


\begin{proof}
By Theorem~\ref{gen}, $\beta_{k-2}$ equals the number of facets in a $k$-dimensional spanning tree of the $k$-skeleton of the pile of cubes of size $\prod_{j=1}^n n_j$. Let $P_{\ell}$ denote the $\ell$-skeleton of this pile of cubes.
By Proposition~\ref{tree_to_beti}, the number of facets in a $k$-dimensional spanning tree of $P_k$ is equal to $\beta_{k-1}(P_{k-1})$. $\beta_{k-1}(P_{k-1})$ satisfies
\[1+(-1)^{k-1} \beta_{k-1}(P_{k-1}) = \prod_{j=1}^n{(n_j+1)} \left(\sum_{\ell=0}^{k-1} (-1)^\ell \sum_{|I|=\ell}\prod_{i\in I}\frac{n_i}{n_i+1}   \right).\]
The left hand side is the Euler characteristic of $P_{k-1}$ computed using the fact that $P_{k-1}$ is homotopy equivalent to a wedge of spheres. The right hand side is the Euler characteristic computed as an alternating sum of the number of cells in each dimension.
\end{proof}

\bibliographystyle{amsalpha}
\bibliography{biblio.bib}

\end{document}